\documentclass[a4paper, 12pt,reqno]{amsart}% Будь ласка, використовуйте клас документа amsart

\usepackage[T2A]{fontenc}
\usepackage[cp1251]{inputenc}% Змініть кодування, якщо потрібно
\usepackage[ukrainian]{babel}
\usepackage{amsmath}
\usepackage{tikz}

%%% Підключайте будь-які необхідні пакети тут %%%%%%%%%%%%%%%%%%%%%%%%%%
% Наприклад,
\usepackage{amsmath}
 \usepackage{graphicx}% Якщо потрібно додавати ілюстрації в документ
\usepackage{textcase}% Корисно для української чи російської мови
\usepackage{amssymb}
%%%%%%%%%%%%%%%%%%%%%%%%%%%%%%%%%%%%%%%%%%%%%%%%%%%%%%%%%%%%%%%%%%%%%%%%

%%% Означення теоремоподібних оточень %%%%%%%%%%%%%%%%%%%%%%%%%%%%%%%%%%
\theoremstyle{plain}
\newtheorem{theorem}{Теорема}
\newtheorem*{theorem*}{Теорема}

\newtheorem*{corollary*}{Наслідок}
\newtheorem{lemma}{Лема}
\newtheorem*{lemma*}{Лема}

\newtheorem*{proposition*}{Твердження}

\newtheorem*{conjecture*}{Гіпотеза}
\theoremstyle{definition}
\newtheorem{definition}{Означення}
\newtheorem*{definition*}{Означення}
\theoremstyle{remark}
\newtheorem{remark}{Зауваження}
\newtheorem*{remark*}{Зауваження}
%%%%%%%%%%%%%%%%%%%%%%%%%%%%%%%%%%%%%%%%%%%%%%%%%%%%%%%%%%%%%%%%%%%%%%%%

%%% Помістить свої локальні означення тут %%%%%%%%%%%%%%%%%%%%%%%%%%%%%%
% Наприклад,
% \newcommand{\R}{\mathbb{R}}
% \newcommand{\set}[1]{\left\{#1\right\}}
% \newcommand{\norm}[2][]{\left\lVert#2\right\rVert_{#1}}
% \DeclareMathOperator{\Prob}{\mathsf{P}}
%%%%%%%%%%%%%%%%%%%%%%%%%%%%%%%%%%%%%%%%%%%%%%%%%%%%%%%%%%%%%%%%%%%%%%%%

\frenchspacing \righthyphenmin=2 \emergencystretch=5pt
\hfuzz=0.5pt \tolerance=400 \oddsidemargin 5mm \evensidemargin 5mm
\textwidth 160mm \textheight 230mm

\hoffset=-0.5cm \voffset=-1.0cm

\begin{document}

УДК 517.5
\title[Узагальнені представлення дійсних чисел]{Про деякі узагальнення зображень дійсних чисел}
\author{ С. О. Сербенюк  }
\address{Інститут математики НАН України\\
         Київ, Україна}
\email{simon6@ukr.net}

\maketitle

\begin{abstract}

The article is devoted to introduce and investigation of new  numeral systems. These representations are generalization of classical representations of real numbers. The main properties of investigating representation are described.

The results are represented in International Conference of Yung Mathematicians, June 3-6, 2015, Kyiv.

\end{abstract}

\section{Знакозмінні представлення дійсних чисел}
\subsection{Квазі-нега-s-ве представлення чисел.}
Дослідимо можливість представлення дійсних чисел знакозмінним  s-им розкладом, знакозмінним рядом Кантора, квазі-нега-$\tilde Q$-розкладом.

Для початку розглянемо найпростіший випадок  --- узагальнення s-их розкладів

\begin{equation}
\label{eq: znakozminnyi-s-rozklad 1}
\sum^{\infty} _{n=1} {\frac{(-1)^{\delta_n}\alpha_n}{s^n}},
\end{equation}
де $s$ ---  фіксоване натуральне число, більше $1$, $\alpha_n\in A\equiv \{0,1,...,s-1\}$.

 Очевидно, залежність $\delta_n$  може бути  або  функцією (що набуває лише значень з множини цілих додатних чисел)  натурального аргументу $\delta_n=\delta(n)$, або залежністю $\delta_n=\delta(\alpha_n)$ від $\alpha_n$. Розглянемо перший випадок, оскільки у другому випадку  легко змоделювати  приклади, де розклад \eqref{eq: znakozminnyi-s-rozklad 1} не є представленням дійсних чисел з деякого відрізка.
 
 Нехай  маємо фіксовану підмножину $N_B$ множини натуральних чисел, $B=(b_n)$ --- зростаюча  послідовність елементів множини $N_B$. Означимо
$$
\delta_n=\begin{cases}
2,&\text{якщо $n \notin N_B$;}\\
1,&\text{якщо $n \in N_B$.}
\end{cases}
$$

Збіжність ряду \eqref{eq: znakozminnyi-s-rozklad 1} є очевидною. Введемо деякі допоміжні поняття, необхідні при подальшому вивченні рядів \eqref{eq: znakozminnyi-s-rozklad 1}.

Якщо деяке число $x$ можна представити  у вигляді  розкладу \eqref{eq: znakozminnyi-s-rozklad 1}, то  цей факт формально  позначатимемо $x=\Delta^{(\pm s, B)} _{\alpha_1\alpha_2...\alpha_n...}$, а останній запис називатимемо \emph{зображенням  числа $x$ знакозмінним $s$-$N_B$-розкладом} або \emph{квазі-нега-s-им зображенням числа $x$}. 

  Відповідно, ряд  \eqref{eq: znakozminnyi-s-rozklad 1} називатимемо \emph{знакозмінним $s$-$N_B$-розкладом числа $x$ } або \emph{квазі-нега-s-им представленням числа $x$}. Тобто,
\begin{equation}
\label{def: znakozminnyi-s-rozklad 1}
x=\Delta^{(\pm s, B)} _{\alpha_1\alpha_2...\alpha_n...}\equiv \sum^{\infty} _{n=1} {\frac{(-1)^{\delta_n}\alpha_n}{s^n}},
\end{equation}
де $s$ ---  фіксоване натуральне число, більше $1$, $\alpha_n\in A\equiv \{0,1,...,s-1\}$,
$$
\delta_n=\begin{cases}
2,&\text{якщо $n \notin N_B$;}\\
1,&\text{якщо $n \in N_B$.}
\end{cases}
$$

Очевидним є факт того, що
$$
x^{'} _0=\inf{\sum^{\infty} _{n=1} {\frac{(-1)^{\delta_n}\alpha_n}{s^n}}}=-\sum^{\infty}_{n=1}{\frac{s-1}{s^{b_n}}}\equiv \Delta^{(\pm s, B)} _{\beta_1\beta_2...\beta_n...},
$$
де
\begin{equation}
\label{eq: znakozminnyi-s-rozklad 2}
\beta_n=\begin{cases}
0,&\text{якщо $n \notin N_B$;}\\
s-1,&\text{якщо $n \in N_B$.}
\end{cases}
\end{equation}
$$
x^{''} _0=\sup{\sum^{\infty} _{n=1} {\frac{(-1)^{\delta_n}\alpha_n}{s^n}}}=\sum_{n \notin N_B} {\frac{s-1}{s^n}}=1-\sum^{\infty}_{n=1}{\frac{s-1}{s^{b_n}}}\equiv \Delta^{(\pm s, B)} _{\gamma_1\gamma_2...\gamma_n...},
$$
де
\begin{equation}
\label{eq: znakozminnyi-s-rozklad 3}
\gamma_n=\begin{cases}
s-1,&\text{якщо $n \notin N_B$;}\\
0,&\text{якщо $n \in N_B$.}
\end{cases}
\end{equation}

\begin{theorem}
\label{th: znakozminnyi-s-rozklad 1}
Для  будь-якого числа $x\in[x^{'}_0;x^{''} _0]$  існує послідовність  $(\alpha_n)$,   $\alpha_n \in A$, така, що    $x$   подається   квазі-нега-s-им представленням \eqref{def: znakozminnyi-s-rozklad 1}.
\end{theorem}
\begin{proof}
Нехай $(x^{'}_0;x^{''} _0)\ni x$ --- довільне число. Якщо  $1\notin N_B$, тоді
$$
\frac{\alpha_1}{s}-\sum^{\infty} _{n=1} {\frac{s-1}{s^{b_n}}} \le x < \frac{\alpha_1}{s}+\sum_{1<n \notin N_B} {\frac{s-1}{s^n}},
$$
$$
-\sum^{\infty} _{n=1} {\frac{s-1}{s^{b_n}}} \le x -\frac{\alpha_1}{s}<\sum_{1<n \notin N_B} {\frac{s-1}{s^n}}=1-\frac{s-1}{s}-\sum^{\infty} _{n=1} {\frac{s-1}{s^{b_n}}}=\frac{1}{s}-\sum^{\infty} _{n=1} {\frac{s-1}{s^{b_n}}},
$$
та, якщо $1\in N_B$,
$$
-\frac{\alpha_1}{s}+\frac{s-1}{s}-\sum^{\infty} _{n=1}{\frac{s-1}{s^{b_n}}}=-\frac{\alpha_1}{s}-\sum^{\infty} _{n=2}{\frac{s-1}{s^{b_n}}}<x\le -\frac{\alpha_1}{s}+1-\sum^{\infty} _{n=1}{\frac{s-1}{s^{b_n}}},
$$
$$
\frac{s-1}{s}-\sum^{\infty} _{n=1}{\frac{s-1}{s^{b_n}}}<x+\frac{\alpha_1}{s} \le  1-\sum^{\infty} _{n=1}{\frac{s-1}{s^{b_n}}}.
$$
В силу того, що
$$
[x^{'}_0;x^{''} _0]=\bigcup^{s-1}_{i=0} {\left[\frac{i}{s}-\sum^{\infty} _{n=1} {\frac{s-1}{s^{b_n}}};\frac{i}{s}+\frac{1}{s}-\sum^{\infty} _{n=1} {\frac{s-1}{s^{b_n}}}\right]},
$$
якщо $1 \notin N_B$, та
$$
[x^{'}_0;x^{''} _0]=\bigcup^{s-1}_{i=0} {\left[-\frac{i}{s}+\frac{s-1}{s}-\sum^{\infty} _{n=1} {\frac{s-1}{s^{b_n}}};-\frac{i}{s}+1-\sum^{\infty} _{n=1} {\frac{s-1}{s^{b_n}}}\right]},
$$
для $1 \in N_B$, тому, якщо для випадку $1 \notin N_B$ позначити $x-\frac{\alpha_1}{s}=x_1$, отримаємо випадки:
\begin{enumerate}
\item  
$$
x_1=-\sum^{\infty} _{n=1} {\frac{s-1}{s^{b_n}}}, ~\text{що еквівалентно}~x=\Delta^{(\pm s, B)} _{\alpha_1\beta_2\beta_3...\beta_n...},
$$
де елементи послідовності $(\beta_n)$ задовольняють умову \eqref{eq: znakozminnyi-s-rozklad 2};
\item
$$
x_1\ne -\sum^{\infty} _{n=1} {\frac{s-1}{s^{b_n}}}. ~\text{В такому разі}~x=\frac{\alpha_1}{s}+x_1.
$$
Оцінимо значення $x_1$ трохи пізніше.
\end{enumerate}

Якщо для випадку $1 \in N_B$ позначити $x+\frac{\alpha_1}{s}=x_1$, отримаємо випадки:
\begin{enumerate}
\item  
$$
x_1=1-\sum^{\infty} _{n=1} {\frac{s-1}{s^{b_n}}}, ~\text{що еквівалентно}~x=\Delta^{(\pm s, B)} _{\alpha_1\gamma_2\gamma_3...\gamma_n...},
$$
де елементи послідовності $(\gamma_n)$ задовольняють умову \eqref{eq: znakozminnyi-s-rozklad 3};
\item
$$
x_1\ne 1 -\sum^{\infty} _{n=1} {\frac{s-1}{s^{b_n}}}. ~\text{В такому разі}~x=-\frac{\alpha_1}{s}+x_1.
$$
\end{enumerate}

Нехай $2 \notin N_B$. Тоді
$$
\frac{\alpha_2}{s^2}-\sum_{b_n>2} {\frac{s-1}{s^{b_n}}}\le x_1<\frac{\alpha_2}{s^2}+\sum_{2<n \notin N_B}{\frac{s-1}{s^n}}.
$$
При $2 \in N_B$
$$
-\frac{\alpha_2}{s^2}-\sum_{b_n>2} {\frac{s-1}{s^{b_n}}}< x_1\le -\frac{\alpha_2}{s^2}+\sum_{2<n \notin N_B}{\frac{s-1}{s^n}}.
$$

Позначивши $x_2=x_1-\frac{\alpha_2}{s^2}$ для випадку $2 \notin N_B$ та $x_2=x_1+\frac{\alpha_2}{s^2}$ для $2 \in N_B$, отримаємо: 
\begin{itemize}
\item для  $2 \notin N_B$
$$
-\sum_{b_n>2} {\frac{s-1}{s^{b_n}}}\le x_2<\sum_{2<n \notin N_B}{\frac{s-1}{s^n}},
$$
звідки слідує два випадки:
\begin{enumerate}
\item якщо
$$
x_2=-\sum^{\infty} _{b_n>2} {\frac{s-1}{s^{b_n}}}, ~\text{то}~x=\Delta^{(\pm s, B)} _{\alpha_1\alpha_2\beta_3...\beta_n...},
$$
де елементи послідовності $(\beta_n)$ задовольняють умову \eqref{eq: znakozminnyi-s-rozklad 2};
\item якщо
$$
x_2\ne -\sum^{\infty} _{b_n>2} {\frac{s-1}{s^{b_n}}}, ~\text{то}~x=\frac{(-1)^{\delta_1}\alpha_1}{s}+\frac{(-1)^{\delta_2}\alpha_1}{s^2}+x_2;
$$
\end{enumerate}

\item для $2 \in N_B$
$$
-\sum_{b_n>2} {\frac{s-1}{s^{b_n}}}< x_2\le \sum_{2<n \notin N_B}{\frac{s-1}{s^n}},
$$
звідки слідує, що:
\begin{enumerate}
\item  якщо
$$
x_2=\sum_{2<n\notin N_B} {\frac{s-1}{s^n}}, ~\text{то}~x=\Delta^{(\pm s, B)} _{\alpha_1\alpha_2\gamma_3\gamma_4...\gamma_n...},
$$
де елементи послідовності $(\gamma_n)$ задовольняють умову \eqref{eq: znakozminnyi-s-rozklad 3};
\item якщо
$$
x_2\ne\sum_{2<n\notin N_B} {\frac{s-1}{s^n}}, ~\text{то}~x=\frac{(-1)^{\delta_1}\alpha_1}{s}+\frac{(-1)^{\delta_2}\alpha_1}{s^2}+x_2.
$$
\end{enumerate}
\end{itemize}

За скінченну кількість кроків $m$ отримаємо, що
$$
\frac{(-1)^{\delta_{m+1}}\alpha_{m+1}}{s^{m+1}}-\sum_{b_n>m+1}{\frac{s-1}{s^{b_n}}}<x_m<\frac{(-1)^{\delta_{m+1}}\alpha_{m+1}}{s^{m+1}}+\sum_{m+1<n \notin N_B} {\frac{s-1}{s^n}},
$$
$$
-\sum_{b_n>m+1}{\frac{s-1}{s^{b_n}}}<x_{m+1}<\sum_{m+1<n \notin N_B} {\frac{s-1}{s^n}},
$$
аналогічно
$$
x_{m+1}=\begin{cases}
-\sum_{b_n>m+1}{\frac{s-1}{s^{b_n}}},&\text{якщо $m+1 \notin N_B$;}\\
\sum_{m+1<n\notin N_B}{\frac{s-1}{s^n}},&\text{якщо $m+1 \in N_B$.}
\end{cases}
$$

Якщо одна з умов останньої  системи справджується, то 
$$
x=\Delta^{(\pm s, B)} _{\alpha_1\alpha_2...\alpha_{m+1}\beta_{m+2}\beta_{m+3}...}~\text{або}~x=\Delta^{(\pm s, B)} _{\alpha_1\alpha_2...\alpha_{m+1}\gamma_{m+2}\gamma_{m+3}...}.
$$

В іншому випадку, продовжуючи процес до нескінченності, отримаємо
$$
x=\frac{(-1)^{\delta_1}\alpha_1}{s}+x_1=\frac{(-1)^{\delta_1}\alpha_1}{s}+\frac{(-1)^{\delta_2}\alpha_2}{s^2}+x_2=...=
$$
$$
=\frac{(-1)^{\delta_1}\alpha_1}{s}+\frac{(-1)^{\delta_2}\alpha_2}{s^2}+\frac{(-1)^{\delta_3}\alpha_3}{s^3}+...+\frac{(-1)^{\delta_n}\alpha_n}{s^n}+x_n=....
$$

Звідки слідує, що
$$
x=\sum^{\infty} _{n=1}{\frac{(-1)^{\delta_n}\alpha_n}{s^n}}.
$$
\end{proof}

\begin{lemma} Справедливою є наступна тотожність
$$
\Delta^{(\pm s, B)} _{\alpha_1\alpha_2...\alpha_n...}+\sum^{\infty} _{n=1} {\frac{s-1}{s^{b_n}}}\equiv \Delta^s _{\alpha^{'} _1\alpha^{'} _2...\alpha^{'} _n...},
$$
де
$$
\alpha^{'} _n=\begin{cases}
\alpha_n,&\text{якщо $n \notin N_B$;}\\
s-1-\alpha_n,&\text{якщо $n \in N_B$,}
\end{cases}
$$
$\Delta^s _{\alpha^{'} _1\alpha^{'} _2...\alpha^{'} _n...}$ --- s-кове зображення дійсного числа з $[0;1]$.
\end{lemma}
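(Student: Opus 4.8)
The plan is to prove the stated identity by a direct term-by-term manipulation of two absolutely convergent series, keeping careful track of the signs $(-1)^{\delta_n}$ and of which indices belong to $N_B$. By the definition of $\delta_n$ one has $(-1)^{\delta_n}=1$ for $n\notin N_B$ and $(-1)^{\delta_n}=-1$ for $n\in N_B$, so that
\[
\Delta^{(\pm s, B)}_{\alpha_1\alpha_2\ldots}=\sum_{n\notin N_B}\frac{\alpha_n}{s^n}-\sum_{n\in N_B}\frac{\alpha_n}{s^n}.
\]
The whole assertion is then essentially a regrouping of summands, and the only genuine content lies in justifying that regrouping and in checking that the resulting digits are admissible.

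First I would observe that, since $B=(b_n)$ is precisely the increasing enumeration of the set $N_B$, the added series can be rewritten as $\sum_{n=1}^{\infty}\frac{s-1}{s^{b_n}}=\sum_{n\in N_B}\frac{s-1}{s^n}$. Both this series and the two series in the display above converge absolutely, since each is dominated by the geometric series $\sum_{n=1}^{\infty}\frac{s-1}{s^n}$; hence I may add them and regroup the summands by the common index $n$ without affecting the value of the sum. Next I would combine the contributions index by index: for $n\notin N_B$ only the term $\frac{\alpha_n}{s^n}$ occurs, giving coefficient $\alpha_n$, while for $n\in N_B$ the terms $-\frac{\alpha_n}{s^n}$ and $\frac{s-1}{s^n}$ combine into $\frac{s-1-\alpha_n}{s^n}$, giving coefficient $s-1-\alpha_n$. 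This is exactly the definition of $\alpha'_n$, so the combined sum equals $\sum_{n=1}^{\infty}\frac{\alpha'_n}{s^n}$, which is the claimed form $\Delta^s_{\alpha'_1\alpha'_2\ldots}$.

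The points requiring genuine care, and hence the main (albeit modest) obstacle, are the two admissibility checks that make the right-hand side a \emph{bona fide} $s$-adic expansion. First I must verify that every digit $\alpha'_n$ lies in $A=\{0,1,\ldots,s-1\}$: this holds because $\alpha'_n=\alpha_n\in A$ when $n\notin N_B$, and $\alpha'_n=s-1-\alpha_n\in A$ when $n\in N_B$, the latter inclusion following from $0\le\alpha_n\le s-1$. Second, once all digits lie in $A$, the value $\sum_{n=1}^{\infty}\frac{\alpha'_n}{s^n}$ is automatically bounded between $0$ and $\sum_{n=1}^{\infty}\frac{s-1}{s^n}=1$, which confirms that it belongs to $[0;1]$ as asserted. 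With the absolute convergence and the digit bounds in hand, the identity follows at once.
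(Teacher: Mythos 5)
Your argument is correct: using $(-1)^{\delta_n}=1$ for $n\notin N_B$ and $(-1)^{\delta_n}=-1$ for $n\in N_B$, rewriting $\sum_{n}\frac{s-1}{s^{b_n}}$ as a sum over $n\in N_B$, and regrouping the absolutely convergent series index by index gives exactly $\sum_n \frac{\alpha'_n}{s^n}$ with $\alpha'_n\in A$, which is the claim. The paper states this lemma without proof, and your term-by-term verification (including the admissibility check $0\le s-1-\alpha_n\le s-1$) is precisely the routine computation it leaves implicit, so there is nothing to add.
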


Наслідком останньої леми та теореми \ref{th: znakozminnyi-s-rozklad 1} є наступне твердження.

\begin{theorem}
Числа з відрізка $[x^{'} _0;x^{''} _0]$ мають не більше двох квазі-нега-s-кових зображень, а саме:
\begin{enumerate}
\item якщо $n,n+1 \in N_B$, то
$$
\Delta^{(\pm s, B)} _{\alpha_1\alpha_2...\alpha_{n-1}[s-1-\alpha_n][s-1]\beta{n+2}\beta_{n+3}...}=\Delta^{(\pm s, B)} _{\alpha_1\alpha_2...\alpha_{n-1}[s-\alpha_n]0\gamma_{n+2}\gamma_{n+3}...};
$$
\item якщо $n \in N_B$, $n+1 \notin N_B$, то
$$
\Delta^{(\pm s, B)} _{\alpha_1\alpha_2...\alpha_{n-1}[s-1-\alpha_n]0\beta{n+2}\beta_{n+3}...}=\Delta^{(\pm s, B)} _{\alpha_1\alpha_2...\alpha_{n-1}[s-\alpha_n][s-1]\gamma_{n+2}\gamma_{n+3}...};
$$
\item якщо $n \notin N_B$, $n+1 \in N_B$, то
$$
\Delta^{(\pm s, B)} _{\alpha_1\alpha_2...\alpha_{n-1}\alpha_n[s-1]\beta{n+2}\beta_{n+3}...}=\Delta^{(\pm s, B)} _{\alpha_1\alpha_2...\alpha_{n-1}[\alpha_n-1]0\gamma_{n+2}\gamma_{n+3}...};
$$
\item якщо $n \notin N_B$, $n+1 \notin N_B$, то
$$
\Delta^{(\pm s, B)} _{\alpha_1\alpha_2...\alpha_{n-1}\alpha_n0\beta{n+2}\beta_{n+3}...}=\Delta^{(\pm s, B)} _{\alpha_1\alpha_2...\alpha_{n-1}[\alpha_n-1][s-1]\gamma_{n+2}\gamma_{n+3}...},
$$
\end{enumerate}
де $\alpha_n\ne 0$, $(\beta_n)$ та $(\gamma_n)$ --- послідовності цифр $0$ та $s-1$, які задовольняють умови \eqref{eq: znakozminnyi-s-rozklad 2} і \eqref{eq: znakozminnyi-s-rozklad 3} відповідно, мають числа зі зліченної підмножини $[x^{'} _0;x^{''} _0]$.
\end{theorem}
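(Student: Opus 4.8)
The plan is to reduce the statement to the classical description of non-uniqueness in the ordinary $s$-adic system by means of the preceding Lemma. Suppose a number $x\in[x^{'}_0;x^{''}_0]$ admits two sign-alternating representations, $x=\Delta^{(\pm s,B)}_{\alpha_1\alpha_2\ldots}$ and $x=\Delta^{(\pm s,B)}_{\alpha^{*}_1\alpha^{*}_2\ldots}$. Adding the constant $C=\sum_{n=1}^{\infty}\frac{s-1}{s^{b_n}}$ to both and applying the Lemma, I obtain two classical $s$-adic representations of one and the same point $y=x+C\in[0;1]$, namely $\Delta^s_{\alpha^{'}_1\alpha^{'}_2\ldots}$ and $\Delta^s_{\alpha^{*\prime}_1\alpha^{*\prime}_2\ldots}$, where for each fixed $k$ the digit transformation $\alpha_k\mapsto\alpha^{'}_k$ is a bijection of the alphabet $A$ (the identity when $k\notin N_B$, and $\alpha\mapsto s-1-\alpha$ when $k\in N_B$). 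Since every coordinate map is a bijection, two sign-alternating codings of $x$ coincide if and only if the associated $s$-adic codings of $y$ coincide; hence $x$ has exactly as many sign-alternating representations as $y$ has $s$-adic representations, which is at most two. This already yields the bound asserted in the theorem.

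The decisive computation is the behaviour of the extremal tails under this transformation. Substituting \eqref{eq: znakozminnyi-s-rozklad 2} and \eqref{eq: znakozminnyi-s-rozklad 3} into the digit map shows that $\beta^{'}_k=0$ and $\gamma^{'}_k=s-1$ for every $k$: if $k\in N_B$ then $\beta_k=s-1\mapsto 0$ and $\gamma_k=0\mapsto s-1$, while if $k\notin N_B$ then $\beta_k=0\mapsto 0$ and $\gamma_k=s-1\mapsto s-1$. Thus the $\beta$-ending corresponds precisely to the all-zero $s$-adic tail and the $\gamma$-ending to the all-$(s-1)$ $s$-adic tail, which is natural since these configurations realise the infimum and the supremum.

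Next I would invoke the classical fact that a point $y\in[0;1]$ possesses two $s$-adic representations exactly when it is an $s$-adic rational, and that they are then
$$
\Delta^s_{c_1\ldots c_{n-1}c_n 0 0 0\ldots}=\Delta^s_{c_1\ldots c_{n-1}[c_n-1][s-1][s-1]\ldots},\qquad c_n\neq 0.
$$
Applying the inverse digit transformation coordinatewise and splitting into the four cases according to whether $n\in N_B$ and whether $n+1\in N_B$, I recover the identities (1)--(4): the common prefix $\alpha^{'}_1\ldots\alpha^{'}_{n-1}$ pulls back to $\alpha_1\ldots\alpha_{n-1}$; the pair $c_n=\alpha_n\neq 0$ versus $c_n-1$ pulls back to the transition digit at place $n$ (to $s-1-\alpha_n$ versus $s-\alpha_n$ when $n\in N_B$, and to $\alpha_n$ versus $\alpha_n-1$ when $n\notin N_B$); the place-$(n+1)$ digit pulls back to $s-1$ or $0$ according to the membership of $n+1$; and the $0$- and $(s-1)$-tails pull back to the $\beta$- and $\gamma$-endings by the previous paragraph. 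The constraint $c_n\ge 1$ is exactly the stated hypothesis $\alpha_n\neq 0$, and one checks that all transformed digits remain in $A$.

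The only genuine work is this final, case-by-case pull-back. The point to watch is that the digit map reverses order at places lying in $N_B$: the downward ``$-1$'' jump of the last significant $s$-adic digit becomes an upward ``$+1$'' jump of the sign-alternating digit precisely at such places, which is what separates cases (1),(2) (where $n\in N_B$) from (3),(4) (where $n\notin N_B$), while the membership of $n+1$ separates (1),(3) from (2),(4). Everything else is routine substitution of \eqref{eq: znakozminnyi-s-rozklad 2}--\eqref{eq: znakozminnyi-s-rozklad 3} and reading off the four displayed identities; the actual occurrence of both endpoints is guaranteed by Theorem \ref{th: znakozminnyi-s-rozklad 1}.
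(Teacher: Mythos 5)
Your proof is correct, but it takes a genuinely different route from the paper's. The paper proves the four identities by direct computation: in each case it writes out the difference of the two series and shows it collapses to zero using $\frac{1}{s^n}-\frac{s-1}{s^{n+1}}-\sum_{k=n+2}^{\infty}\frac{s-1}{s^k}=0$; it does not separately argue the ``at most two'' bound. You instead conjugate the whole system to the classical $s$-adic one via the preceding Lemma (adding the constant $\sum_{n}\frac{s-1}{s^{b_n}}$ and applying the coordinatewise digit bijection), observe that the $\beta$- and $\gamma$-tails correspond exactly to the all-zero and all-$(s-1)$ $s$-adic tails, and then pull back the classical two-representation identity case by case. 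Your route buys the cardinality bound for free (the coordinatewise bijection preserves the number of codings, and a point of $[0;1]$ has at most two $s$-adic codings) and makes the structural origin of the four cases transparent: the digit map reverses order exactly at places of $N_B$, which is why the ``$-1$'' jump at position $n$ becomes ``$s-\alpha_n$ versus $s-1-\alpha_n$'' in cases (1)--(2) and stays ``$\alpha_n-1$ versus $\alpha_n$'' in cases (3)--(4). The cost is that you lean on the Lemma, which the paper states without proof, and on the classical $s$-adic non-uniqueness theorem, whereas the paper's verification is elementary and self-contained. Your pull-back is accurate --- e.g.\ in case (1) the left-hand side maps to $\Delta^s_{\alpha'_1\ldots\alpha'_{n-1}\alpha_n000\ldots}$ and the right-hand side to $\Delta^s_{\alpha'_1\ldots\alpha'_{n-1}[\alpha_n-1][s-1][s-1]\ldots}$ --- and the hypothesis $\alpha_n\ne 0$ is exactly what keeps the digit $s-\alpha_n$ inside the alphabet $A$, matching the classical condition $c_n\ne 0$.
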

\begin{proof} Справді,
\begin{enumerate}
\item нехай $n,n+1 \in N_B$, тоді
$$
\Delta^{(\pm s, B)} _{\alpha_1\alpha_2...\alpha_{n-1}[s-1-\alpha_n][s-1]\beta{n+2}\beta_{n+3}...}-\Delta^{(\pm s, B)} _{\alpha_1\alpha_2...\alpha_{n-1}[s-\alpha_n]0\gamma_{n+2}\gamma_{n+3}...}=
$$
$$
\left(-\frac{s-1-\alpha_n}{s^n}-\frac{s-1}{s^{n+1}}-\sum_{b_n>n+1}{\frac{s-1}{s^{b_n}}}\right)-\left(-\frac{s-\alpha_n}{s^n}+\sum_{n+1<k\notin N_B}{\frac{s-1}{s^k}}\right)=
$$
$$
=\frac{1}{s^n}-\frac{s-1}{s^{n+1}}-\sum^{\infty} _{k=n+2}{\frac{s-1}{s^k}}=0;
$$
\item нехай $n \in N_B, n+1 \notin N_B$, тоді
$$
\Delta^{(\pm s, B)} _{\alpha_1\alpha_2...\alpha_{n-1}[s-1-\alpha_n]0\beta{n+2}\beta_{n+3}...}-\Delta^{(\pm s, B)} _{\alpha_1\alpha_2...\alpha_{n-1}[s-\alpha_n][s-1]\gamma_{n+2}\gamma_{n+3}...}=
$$
$$
=\left(-\frac{s-1-\alpha_n}{s^n}-\sum_{b_n>n+1}{\frac{s-1}{s^{b_n}}}\right)-\left(-\frac{s-\alpha_n}{s^n}+\frac{s-1}{s^{n+1}}+\sum_{n+1<k\notin N_B}{\frac{s-1}{s^k}}\right)=
$$
$$
=\frac{s-\alpha_n}{s^n}+\frac{1}{s^n}+\frac{s-\alpha_n}{s^n}-\frac{s-1}{s^{n+1}}-\frac{1}{s^{n+1}}=0;
$$
\item нехай $n\notin N_B, n+1 \in N_B$, тоді
$$
\Delta^{(\pm s, B)} _{\alpha_1\alpha_2...\alpha_{n-1}\alpha_n[s-1]\beta{n+2}\beta_{n+3}...}-\Delta^{(\pm s, B)} _{\alpha_1\alpha_2...\alpha_{n-1}[\alpha_n-1]0\gamma_{n+2}\gamma_{n+3}...}=
$$
$$
=\left(\frac{\alpha_n}{s^n}-\frac{s-1}{s^{n+1}}-\sum_{b_n>n+1}{\frac{s-1}{s^{b_n}}}\right)-\left(\frac{\alpha_n-1}{s^n}+\sum_{n+1<k\notin N_B}{\frac{s-1}{s^k}}\right)=
$$
$$
\frac{\alpha_n}{s^n}-\frac{s-1}{s^{n+1}}-\frac{\alpha_n-1}{s^n}-\frac{1}{s^{n+1}}=0;
$$
\item нехай $n\notin N_B,n+1 \notin N_B$, тоді
$$
\Delta^{(\pm s, B)} _{\alpha_1\alpha_2...\alpha_{n-1}\alpha_n0\beta{n+2}\beta_{n+3}...}-\Delta^{(\pm s, B)} _{\alpha_1\alpha_2...\alpha_{n-1}[\alpha_n-1][s-1]\gamma_{n+2}\gamma_{n+3}...}=
$$
$$
=\left(\frac{\alpha_n}{s^n}-\sum_{b_n>n+1}{\frac{s-1}{s^{b_n}}}\right)-\left(\frac{\alpha_n-1}{s^n}+\frac{s-1}{s^{n+1}}+\sum_{n+1<k\notin N_B}{\frac{s-1}{s^k}}\right)=
$$
$$
=\frac{\alpha_n}{s^n}-\frac{\alpha_n-1}{s^n}-\frac{s-1}{s^{n+1}}-\frac{1}{s^{n+1}}=0.
$$
\end{enumerate}
\end{proof}

\emph{Циліндром $\Delta^{(\pm s, B)} _{c_1c_2...c_n}$ рангу $n$  з основою $c_1c_2...c_n$} називається множина виду
$$
\Delta^{(\pm s, B)} _{c_1c_2...c_n}\equiv\left\{x: x=\sum^{n} _{i=1}{\frac{(-1)^{\delta_i}c_i}{s^i}}+\sum^{\infty} _{j=n+1}{\frac{(-1)^{\delta_j}\alpha_j}{s^j}}\right\},
$$
де $c_1,c_2,...,c_n$ --- фіксовані числа з множини $A\equiv \{0,1,...,s-1\}$, $\alpha_j \in A$.

\begin{lemma} Для циліндрів $\Delta^{(\pm s, B)} _{c_1c_2...c_n}$ справедливими є наступні властивості:
\begin{enumerate}
\item Циліндр $\Delta^{(\pm s, B)} _{c_1c_2...c_n}$ є відрізком,  причому
$$
\Delta^{(\pm s, B)} _{c_1c_2...c_n}\equiv \left[\sum^{n} _{i=1}{\frac{(-1)^{\delta_i}c_i}{s^i}}-\sum_{n<b_k}{\frac{s-1}{s^{b_k}}};\sum^{n} _{i=1}{\frac{(-1)^{\delta_i}c_i}{s^i}}+\sum_{n<k\notin N_B}{\frac{s-1}{s^k}}\right].
$$
\item
$$
|\Delta^{(\pm s, B)} _{c_1c_2...c_n}|=\frac{1}{s^n}.
$$
\item 
$$
\Delta^{(\pm s, B)} _{c_1c_2...c_nc}\subset \Delta^{(\pm s, B)} _{c_1c_2...c_n}.
$$
\item 
$$
\Delta^{(\pm s, B)} _{c_1c_2...c_n}=\bigcup^{s-1} _{c=0}{\Delta^{(\pm s, B)} _{c_1c_2...c_nc}}.
$$
\item
$$
\left\{
\begin{aligned}
\sup{\Delta^{(\pm s, B)} _{c_1c_2...c_{n-1}c_n}}&=\inf{\Delta^{(\pm s, B)} _{c_1c_2...c_{n-1}[c_n+1]}},&\text{якщо $n \notin N_B$;}\\
\inf{\Delta^{(\pm s, B)} _{c_1c_2...c_{n-1}c_n}}& = \sup{\Delta^{(\pm s, B)} _{c_1c_2...c_{n-1}[c_n+1]}},&\text{якщо $n \in N_B$},\\
\end{aligned}
\right.
$$
де $c_n \ne s-1$.
\item
$$
\bigcap^{\infty} _{n=1}{\Delta^{(\pm s, B)} _{c_1c_2...c_n}}=x=\Delta^{(\pm s, B)} _{c_1c_2...c_n...}.
$$
\end{enumerate}
\end{lemma}
\begin{proof}
Доведемо, що циліндр $\Delta^{(\pm s, B)} _{c_1c_2...c_n}$ є відрізком. Нехай $x \in \Delta^{(\pm s, B)} _{c_1c_2...c_n}$, тоді  
$$
 x^{'}=\sum^{n} _{i=1}{\frac{(-1)^{\delta_i}c_i}{s^i}}-\sum_{n<b_k}{\frac{s-1}{s^{b_k}}}\le x\le\sum^{n} _{i=1}{\frac{(-1)^{\delta_i}c_i}{s^i}}+\sum_{n<k\notin N_B}{\frac{s-1}{s^k}}=x^{''}.
$$

Отже, $\Delta^{(\pm s, B)} _{c_1c_2...c_n}\subseteq [x^{'};x^{''}]\ni x$.

В силу того, що 
$$
-\sum_{n<b_k}{\frac{s-1}{s^{b_k}}}=\inf{\sum^{\infty} _{j=n+1}{\frac{(-1)^{\delta_j}\alpha_j}{s^j}}},
$$
$$
\sum_{n<k\notin N_B}{\frac{s-1}{s^k}}=\sup{\sum^{\infty} _{j=n+1}{\frac{(-1)^{\delta_j}\alpha_j}{s^j}}},
$$
слідує, що $x \in\Delta^{(\pm s, B)} _{c_1c_2...c_n}$, $x^{'}\in\Delta^{(\pm s, B)} _{c_1c_2...c_n}\ni x^{''}$. Отже, $\Delta^{(\pm s, B)} _{c_1c_2...c_n}$ --- відрізок.

\emph{Властивості 2 --- 4, 6} є очевидними і слідують з наведених вище міркувань. 

5. Розглянемо різниці
$$
\sup{\Delta^{(\pm s, B)} _{c_1c_2...c_{n-1}c_n}}-\inf{\Delta^{(\pm s, B)} _{c_1c_2...c_{n-1}[c_n+1]}}=
$$
$$
=\frac{(-1)^{\delta_n}c_n}{s^n}+\sum_{n<k\notin N_B}{\frac{s-1}{s^k}}+\sum_{n<b_k}{\frac{s-1}{s^{b_k}}}-\frac{(-1)^{\delta_n}(c_n+1)}{s^n}=\frac{1}{s^n}+(-1)^{\delta_n}\frac{c_n-c_n-1}{s^n}=0,
$$
оскільки $n\notin N_B$.
$$
\inf{\Delta^{(\pm s, B)} _{c_1c_2...c_{n-1}c_n}}-\sup{\Delta^{(\pm s, B)} _{c_1c_2...c_{n-1}[c_n+1]}}=\frac{(-1)^{\delta_n}c_n}{s^n}-\sum_{n<b_k}{\frac{s-1}{s^{b_k}}}-\frac{(-1)^{\delta_n}(c_n+1)}{s^n}-\sum_{n<k\notin N_B}{\frac{s-1}{s^k}}=
$$
$$
=(-1)^{\delta_n}\frac{c_n-c_n-1}{s^n}-\frac{1}{s^n}=0,~\text{оскільки}~n\in N_B.
$$
\end{proof}

\subsection{Квазі-нега-D-представлення дійсних чисел.}
Поняття квазінега-s-го представлення можна узагальнити до квазі-нега-D-представлення (знакозмінного ряду Кантора)  
\begin{equation}
\label{eq: znakozminnyi ryad Kantora}
\sum^{\infty} _{n=1}{\frac{(-1)^{\delta_n}\varepsilon_n}{d_1d_2...d_n}},~\text{де}
\end{equation}
 $D\equiv (d_n)$ --- фіксована послідовність натуральних чисел, більших $1$, $\varepsilon_n\in A_{d_n}\equiv$ $\equiv \{0,1,...,d_n-~1\}$,
$$
\delta_n=\begin{cases}
2,&\text{якщо $n \notin N_B$;}\\
1,&\text{якщо $n \in N_B$,}
\end{cases}
$$

Позначення $x=\Delta^{(\pm D, B)} _{\varepsilon_1\varepsilon_2...\varepsilon_n...}$ називатимемо \emph{зображенням  числа $x$ знакозмінним $N_B$-розкладом в ряд Кантора} або \emph{квазі-нега-D-зображенням числа $x$}, а ряд \eqref{eq: znakozminnyi ryad Kantora} називатимемо \emph{знакозмінним $N_B$-розкладом в ряд Кантора} або \emph{квазі-нега-D-представленням числа $x$}.

Провівши міркування, подібні міркуванням в попередньому пункті, отримаємо:
\begin{theorem}
Для  будь-якого числа $x\in[x^{'}_0;x^{''} _0]$, де
$$
x^{'}_0=-\sum_{n\in N_B}{\frac{d_n-1}{d_1d_2...d_n}},~~~x^{''} _0=\sum_{n\notin N_B}{\frac{d_n-1}{d_1d_2...d_n}}
$$
 існує послідовність  $(\varepsilon_n)$,   $\varepsilon_n \in A_{d_n}$, така, що    
$$
x=\sum^{\infty} _{n=1}{\frac{(-1)^{\delta_n}\varepsilon_n}{d_1d_2...d_n}}\equiv \Delta^{(\pm D, B)} _{\varepsilon_1\varepsilon_2...\varepsilon_n...}
$$
\end{theorem}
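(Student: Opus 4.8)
The plan is to reproduce the nested-interval (greedy) construction of Theorem~\ref{th: znakozminnyi-s-rozklad 1}, now adapted to the Cantor base $D=(d_n)$. The only structural fact that changes is the length bookkeeping: whereas in the $s$-adic case a cylinder of rank $n$ has length $s^{-n}$, here it has length $(d_1 d_2\cdots d_n)^{-1}$, and the governing identity is the classical Cantor-series summation
$$\sum_{n=1}^{\infty}\frac{d_n-1}{d_1 d_2\cdots d_n}=1,\qquad\text{so that}\qquad \sum_{n=m+1}^{\infty}\frac{d_n-1}{d_1 d_2\cdots d_n}=\frac{1}{d_1 d_2\cdots d_m}.$$
First I would record the tail extrema: for any admissible digit string the tail from position $m+1$ satisfies
$$-\sum_{\substack{n>m\\ n\in N_B}}\frac{d_n-1}{d_1\cdots d_n}\;\le\;\sum_{n=m+1}^{\infty}\frac{(-1)^{\delta_n}\varepsilon_n}{d_1\cdots d_n}\;\le\;\sum_{\substack{n>m\\ n\notin N_B}}\frac{d_n-1}{d_1\cdots d_n},$$
the two extremes being attained by the Cantor analogues of the sequences $(\beta_n)$ and $(\gamma_n)$ from \eqref{eq: znakozminnyi-s-rozklad 2} and \eqref{eq: znakozminnyi-s-rozklad 3}, i.e. digit $0$ at the $+$ positions and $d_n-1$ at the $-$ positions for the infimum, and conversely for the supremum. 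Taking $m=0$ recovers the stated $x'_0$ and $x''_0$.

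Next comes the decomposition step. Fixing $\varepsilon_1=i\in\{0,\dots,d_1-1\}$ confines $x$ to the interval obtained by shifting the rank-$1$ tail range by $(-1)^{\delta_1}i/d_1$; when $1\notin N_B$ this interval is
$$\left[\frac{i}{d_1}-\sum_{\substack{n>1\\ n\in N_B}}\frac{d_n-1}{d_1\cdots d_n},\;\frac{i}{d_1}+\sum_{\substack{n>1\\ n\notin N_B}}\frac{d_n-1}{d_1\cdots d_n}\right],$$
and when $1\in N_B$ the orientation is reversed (increasing $i$ slides the interval to the left). The crucial point, the Cantor analogue of property~5 of the preceding cylinder lemma, is that these $d_1$ intervals tile $[x'_0,x''_0]$ with consecutive ones meeting exactly at a shared endpoint. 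This follows at once from the tail identity, since each interval has length
$$\sum_{\substack{n>1\\ n\in N_B}}\frac{d_n-1}{d_1\cdots d_n}+\sum_{\substack{n>1\\ n\notin N_B}}\frac{d_n-1}{d_1\cdots d_n}=\sum_{n=2}^{\infty}\frac{d_n-1}{d_1\cdots d_n}=\frac{1}{d_1},$$
which exactly matches the spacing $i/d_1\mapsto(i+1)/d_1$. I therefore pick an index with $x$ in its interval, set $x_1=x-(-1)^{\delta_1}\varepsilon_1/d_1$, and note that $x_1$ then lies in the rank-$1$ tail range.

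Finally I iterate. At each stage either $x_m$ equals one of its two extreme tail values, in which case the construction terminates: appending the corresponding $(\beta_n)$- or $(\gamma_n)$-type tail gives $x$ the required representation. Otherwise $x_m$ is interior to its range, and the same decomposition applied to the rank-$m$ interval (of length $(d_1\cdots d_m)^{-1}$) produces $\varepsilon_{m+1}$ and a residual $x_{m+1}$ in the rank-$(m+1)$ range. If the process never terminates, then
$$\Big|x-\sum_{n=1}^{m}\frac{(-1)^{\delta_n}\varepsilon_n}{d_1\cdots d_n}\Big|=|x_m|\le\sum_{n=m+1}^{\infty}\frac{d_n-1}{d_1\cdots d_n}=\frac{1}{d_1\cdots d_m}\xrightarrow[m\to\infty]{}0,$$
because every $d_n\ge 2$; hence the partial sums converge to $x$ and $(\varepsilon_n)$ represents it. The main obstacle is the tiling verification of the decomposition step: one must confirm that the $d_1$ candidate intervals cover $[x'_0,x''_0]$ with no gap and no overlap beyond shared endpoints, and this has to be carried out separately for the two orientations ($1\in N_B$ versus $1\notin N_B$), tracking the sign flip at the $N_B$-positions, which reverses the order in which the subintervals are arranged. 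Everything else is verbatim the $s$-adic argument with $s^{-n}$ replaced throughout by $(d_1\cdots d_n)^{-1}$.
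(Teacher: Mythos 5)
Your proof is correct and follows exactly the route the paper intends: the paper gives no proof of this statement, remarking only that it is established analogously to the $s$-adic case (Theorem \ref{th: znakozminnyi-s-rozklad 1}), and your argument is precisely that adaptation. The one genuinely new ingredient required --- the telescoping tail identity $\sum_{n>m}\frac{d_n-1}{d_1d_2\cdots d_n}=\frac{1}{d_1d_2\cdots d_m}$, which is what makes the $d_1$ rank-one subintervals tile $[x'_0;x''_0]$ end to end and forces $|x_m|\to 0$ --- is correctly identified and used.
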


\begin{lemma} Справедливою є наступна тотожність
$$
\Delta^{(\pm D, B)} _{\varepsilon_1\varepsilon_2...\varepsilon_n...}+\sum_{n\in N_B} {\frac{d_n-1}{d_1d_2...d_n}}\equiv \Delta^D _{\varepsilon^{'} _1\varepsilon^{'} _2...\varepsilon^{'} _n...},
$$
де
$$
\varepsilon^{'} _n=\begin{cases}
\varepsilon_n,&\text{якщо $n \notin N_B$;}\\
d_n-1-\varepsilon_n,&\text{якщо $n \in N_B$,}
\end{cases}
$$
$\Delta^D _{\varepsilon^{'} _1\varepsilon^{'} _2...\varepsilon^{'} _n...}$ --- D-зображення дійсного числа з $[0;1]$.
\end{lemma}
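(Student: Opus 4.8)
The plan is to prove the identity by a single termwise rearrangement of the defining series, exactly as in the $s$-ary lemma proved above. The one computational fact I would isolate first is the effect of the sign factor: since $\delta_n=2$ for $n\notin N_B$ and $\delta_n=1$ for $n\in N_B$, we have $(-1)^{\delta_n}=+1$ precisely on the indices outside $N_B$ and $(-1)^{\delta_n}=-1$ precisely on the indices of $N_B$. Thus the sign-changing $D$-series $\Delta^{(\pm D,B)}_{\varepsilon_1\varepsilon_2\ldots}$ carries the ordinary Cantor terms on $\mathbb{N}\setminus N_B$ and their negatives on $N_B$, and the whole point of adding the constant $\sum_{n\in N_B}\frac{d_n-1}{d_1d_2\cdots d_n}$ is to convert each negative term $-\varepsilon_n$ on $N_B$ into the nonnegative Cantor digit $d_n-1-\varepsilon_n$.

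Carrying this out, I would split the defining sum over $N_B$ and its complement, add the constant, and recombine the two sums indexed by $N_B$:
\begin{align*}
\sum_{n=1}^{\infty}\frac{(-1)^{\delta_n}\varepsilon_n}{d_1d_2\cdots d_n}+\sum_{n\in N_B}\frac{d_n-1}{d_1d_2\cdots d_n}
&=\sum_{n\notin N_B}\frac{\varepsilon_n}{d_1d_2\cdots d_n}-\sum_{n\in N_B}\frac{\varepsilon_n}{d_1d_2\cdots d_n}+\sum_{n\in N_B}\frac{d_n-1}{d_1d_2\cdots d_n}\\
&=\sum_{n\notin N_B}\frac{\varepsilon_n}{d_1d_2\cdots d_n}+\sum_{n\in N_B}\frac{d_n-1-\varepsilon_n}{d_1d_2\cdots d_n}\\
&=\sum_{n=1}^{\infty}\frac{\varepsilon_n'}{d_1d_2\cdots d_n},
\end{align*}
where the last equality is merely the definition of $\varepsilon_n'$ (namely $\varepsilon_n'=\varepsilon_n$ off $N_B$ and $\varepsilon_n'=d_n-1-\varepsilon_n$ on $N_B$). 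The right-hand side is by definition $\Delta^D_{\varepsilon_1'\varepsilon_2'\ldots}$, which is precisely the asserted identity.

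The only points that need genuine (if mild) justification, and which I regard as the \emph{hard part}, are the two legitimacy conditions rather than the algebra. First, that the rearrangement and termwise addition above are valid: this follows because every series in sight is dominated by the Cantor series $\sum_{n=1}^{\infty}\frac{d_n-1}{d_1d_2\cdots d_n}$, which telescopes via $\frac{d_n-1}{d_1d_2\cdots d_n}=\frac{1}{d_1d_2\cdots d_{n-1}}-\frac{1}{d_1d_2\cdots d_n}$ to the value $1$; in particular the added constant $\sum_{n\in N_B}\frac{d_n-1}{d_1d_2\cdots d_n}$ is a convergent subseries of nonnegative terms even when $N_B$ is infinite, so all regroupings are legitimate. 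Second, that $\Delta^D_{\varepsilon_1'\varepsilon_2'\ldots}$ really is a bona fide $D$-representation, i.e. $\varepsilon_n'\in A_{d_n}=\{0,1,\ldots,d_n-1\}$: for $n\notin N_B$ this is immediate, and for $n\in N_B$ it holds because $\varepsilon_n\in\{0,\ldots,d_n-1\}$ forces $d_n-1-\varepsilon_n\in\{0,\ldots,d_n-1\}$. With these two remarks secured, the termwise identity above completes the proof, in full analogy with the $s$-ary lemma.
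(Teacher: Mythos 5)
Your proof is correct: the sign factor analysis, the split of the series over $N_B$ and its complement, the recombination into the digits $\varepsilon'_n$, and the justification of absolute convergence via the telescoping bound $\sum_n \frac{d_n-1}{d_1d_2\cdots d_n}=1$ are all sound, and the check that $\varepsilon'_n\in A_{d_n}$ is exactly what is needed for the right-hand side to be a genuine $D$-representation. The paper states this lemma without proof (treating it as evident by analogy with the $s$-ary case), and your argument is precisely the routine termwise computation that the authors implicitly rely on, so there is nothing to compare beyond noting that you have supplied the omitted details correctly.
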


\begin{theorem}
Числа з відрізка $[x^{'} _0;x^{''} _0]$ мають не більше двох квазі-нега-D-зображень, а саме:
\begin{enumerate}
\item якщо $n,n+1 \in N_B$, то
$$
\Delta^{(\pm D, B)} _{\varepsilon_1\varepsilon_2...\varepsilon_{n-1}[d_n-1-\varepsilon_n][d_{n+1}-1]\beta{n+2}\beta_{n+3}...}=\Delta^{(\pm D, B)} _{\varepsilon_1\varepsilon_2...\varepsilon_{n-1}[d_n-\varepsilon_n]0\gamma_{n+2}\gamma_{n+3}...};
$$
\item якщо $n \in N_B$, $n+1 \notin N_B$, то
$$
\Delta^{(\pm D, B)} _{\varepsilon_1\varepsilon_2...\varepsilon_{n-1}[d_n-1-\varepsilon_n]0\beta{n+2}\beta_{n+3}...}=\Delta^{(\pm D, B)} _{\varepsilon_1\varepsilon_2...\varepsilon_{n-1}[d_n-\varepsilon_n][d_{n+1}-1]\gamma_{n+2}\gamma_{n+3}...};
$$
\item якщо $n \notin N_B$, $n+1 \in N_B$, то
$$
\Delta^{(\pm D, B)} _{\varepsilon_1\varepsilon_2...\varepsilon_{n-1}\varepsilon_n[d_{n+1}-1]\beta{n+2}\beta_{n+3}...}=\Delta^{(\pm D, B)} _{\varepsilon_1\varepsilon_2...\varepsilon_{n-1}[\varepsilon_n-1]0\gamma_{n+2}\gamma_{n+3}...};
$$
\item якщо $n \notin N_B$, $n+1 \notin N_B$, то
$$
\Delta^{(\pm D, B)} _{\varepsilon_1\varepsilon_2...\varepsilon_{n-1}\varepsilon_n0\beta{n+2}\beta_{n+3}...}=\Delta^{(\pm D, B)} _{\varepsilon_1\varepsilon_2...\varepsilon_{n-1}[\varepsilon_n-1][d_{n+1}-1]\gamma_{n+2}\gamma_{n+3}...},
$$
\end{enumerate}
де $\varepsilon_n\ne 0$, $(\beta_n)$ та $(\gamma_n)$ --- послідовності цифр $0$ та $d_n-1$, які задовольняють умови \eqref{eq: znakozminnyi-D-rozklad 2} і \eqref{eq: znakozminnyi-D-rozklad 3} відповідно, мають числа зі зліченної підмножини $[x^{'} _0;x^{''} _0]$.
\end{theorem}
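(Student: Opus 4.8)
The plan is to reduce the entire statement to the classical Cantor ($D$-adic) representation, using the preceding lemma as the bridge, rather than to verify the four identities from scratch. Put $c=\sum_{k\in N_B}\frac{d_k-1}{d_1 d_2\cdots d_k}$. By that lemma the assignment
$$\Delta^{(\pm D, B)}_{\varepsilon_1\varepsilon_2\ldots}\longmapsto \Delta^{(\pm D, B)}_{\varepsilon_1\varepsilon_2\ldots}+c=\Delta^{D}_{\varepsilon'_1\varepsilon'_2\ldots},\qquad \varepsilon'_n=\begin{cases}\varepsilon_n,& n\notin N_B,\\ d_n-1-\varepsilon_n,& n\in N_B,\end{cases}$$
is a rigid translation of $[x'_0;x''_0]$ onto $[0;1]$, hence a bijection; moreover for each fixed $n$ the digit correspondence $\varepsilon_n\leftrightarrow\varepsilon'_n$ is a bijection of $A_{d_n}$ onto itself. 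Consequently two symbol sequences $(\varepsilon_n)$ and $(\tilde\varepsilon_n)$ produce the same number under \eqref{eq: znakozminnyi ryad Kantora} if and only if the transformed sequences $(\varepsilon'_n)$ and $(\tilde\varepsilon'_n)$ produce the same classical Cantor series in $[0;1]$.

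From this the cardinality claim is immediate. A point of $[0;1]$ has a unique Cantor $D$-representation except when it is $D$-rational, in which case it has exactly two, forming the standard pair
$$\Delta^{D}_{c_1\ldots c_{n-1}c_n(d_{n+1}-1)(d_{n+2}-1)\ldots}=\Delta^{D}_{c_1\ldots c_{n-1}(c_n+1)\,0\,0\ldots},\qquad c_n\ne d_n-1.$$
Pulling this pair back through the inverse digit correspondence produces at most two $(\pm D,B)$-representations of each $x$, which is the first assertion. Note that both classical representations share the prefix $c_1\ldots c_{n-1}$ and first differ at position $n$, so their alternating preimages share $\varepsilon_1\ldots\varepsilon_{n-1}$ and differ from position $n$ on, exactly as in cases (1)--(4).

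It then remains to translate this one classical relation into the four displayed identities, and the splitting into cases is forced by whether the two active positions $n$ and $n+1$ lie in $N_B$. At a position inside $N_B$ the rule $\varepsilon'\mapsto d-1-\varepsilon'$ sends a maximal digit to $0$ and conversely, and converts the increment $c_n\mapsto c_n+1$ into a decrement $\varepsilon_n\mapsto\varepsilon_n-1$; at a position outside $N_B$ every digit is unchanged. Recalling further that the endpoint tails $(\beta_n)$ and $(\gamma_n)$ are precisely the sequences realizing $x'_0$ and $x''_0$, so that they transform into the all-zero and all-maximal classical tails respectively, the identity above becomes verbatim one of cases (1)--(4) according to the four combinations of membership of $n$ and $n+1$ in $N_B$. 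I would carry out one combination in full and remark that the other three are identical in form.

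The only genuine work, and the place where an error is easiest, is the bookkeeping of the four case translations: keeping straight which of the increment at position $n$ and the sign flip is active at each of $n$ and $n+1$, and checking that $(\beta_n),(\gamma_n)$ land on the correct maximal or zero tails. As an independent cross-check, and the route actually used for the analogous $s$-adic theorem, one may instead verify each identity directly by forming the difference of the two sides and collapsing the telescoping tail $\sum_{k>n+1}\frac{d_k-1}{d_1\cdots d_k}=\frac{1}{d_1\cdots d_{n+1}}$ to obtain $0$; this bypasses the reduction at the cost of repeating essentially the same arithmetic four times.
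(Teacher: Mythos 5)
Your argument is correct, but it is not the route the paper takes. The paper in fact prints no proof of this theorem at all: it is stated as the $D$-analogue of the $s$-adic theorem, whose proof consists of writing each of the four identities as a difference of two series and collapsing the telescoping tail $\sum_{k>n+1}\frac{d_k-1}{d_1\cdots d_k}=\frac{1}{d_1\cdots d_{n+1}}$ to obtain $0$ --- exactly the ``independent cross-check'' you relegate to your final sentence. Your primary argument instead conjugates the whole problem, via the translation lemma, into the classical Cantor $D$-representation of $x+c$ and pulls back the standard terminating/all-maximal pair. This buys you something the paper's computation does not: the direct verification only shows that the four exhibited pairs of representations coincide, and never addresses why there are no other coincidences, whereas your bijection argument (translation by a constant composed with a digitwise bijection of each alphabet $A_{d_n}$) proves the cardinality claim ``no more than two'' as well. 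The price is that you must import the classical uniqueness result for Cantor series as a black box; that result is standard and is consistent with the cylinder lemma stated later in the paper, so nothing is circular. Your case bookkeeping is also right: digits at positions outside $N_B$ are fixed, digits at positions in $N_B$ are reflected by $\varepsilon\mapsto d-1-\varepsilon$ (turning the classical increment $c_n\mapsto c_n+1$ into the decrement visible in cases (1)--(2) versus (3)--(4)), and $(\beta_n)$, $(\gamma_n)$ transform into the all-zero and all-maximal classical tails respectively; all four combinations check out, with the hypothesis $\varepsilon_n\neq 0$ ensuring the pulled-back digit stays in the alphabet.
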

\begin{equation}
\label{eq: znakozminnyi-D-rozklad 2}
\beta_n=\begin{cases}
0,&\text{якщо $n \notin N_B$;}\\
d_n-1,&\text{якщо $n \in N_B$,}
\end{cases}
\end{equation}
\begin{equation}
\label{eq: znakozminnyi-D-rozklad 3}
\gamma_n=\begin{cases}
d_n-1,&\text{якщо $n \notin N_B$;}\\
0,&\text{якщо $n \in N_B$.}
\end{cases}
\end{equation}

\emph{Циліндром $\Delta^{(\pm D, B)} _{c_1c_2...c_n}$ рангу $n$  з основою $c_1c_2...c_n$} називається множина виду
$$
\Delta^{(\pm D, B)} _{c_1c_2...c_n}\equiv\left\{x: x=\sum^{n} _{i=1}{\frac{(-1)^{\delta_i}c_i}{d_1d_2...d_i}}+\sum^{\infty} _{j=n+1}{\frac{(-1)^{\delta_j}\varepsilon_j}{d_1d_2...d_j}}\right\},
$$
де $c_i\in A_{d_i}$ --- фіксовані числа, $i=\overline{1,n}$,  та $\varepsilon_j \in A_{d_j}$.

\begin{lemma} Для циліндрів $\Delta^{(\pm D, B)} _{c_1c_2...c_n}$ справедливими є наступні властивості:
\begin{enumerate}
\item Циліндр $\Delta^{(\pm D, B)} _{c_1c_2...c_n}$ є відрізком,  причому
$$
\Delta^{(\pm s, B)} _{c_1c_2...c_n}\equiv \left[\sum^{n} _{i=1}{\frac{(-1)^{\delta_i}c_i}{d_1d_2...d_i}}-\sum_{n<k\in N_B}{\frac{d_k-1}{d_1d_2...d_k}};\sum^{n} _{i=1}{\frac{(-1)^{\delta_i}c_i}{d_1d_2...d_i}}+\sum_{n<k\notin N_B}{\frac{d_k-1}{d_1d_2...d_k}}\right].
$$
\item
$$
|\Delta^{(\pm D, B)} _{c_1c_2...c_n}|=\frac{1}{d_1d_2...d_n}.
$$
\item 
$$
\Delta^{(\pm D, B)} _{c_1c_2...c_nc}\subset \Delta^{(\pm D, B)} _{c_1c_2...c_n}.
$$
\item 
$$
\Delta^{(\pm D, B)} _{c_1c_2...c_n}=\bigcup^{d_{n+1}-1} _{c=0}{\Delta^{(\pm D, B)} _{c_1c_2...c_nc}}.
$$
\item
$$
\left\{
\begin{aligned}
\sup{\Delta^{(\pm D, B)} _{c_1c_2...c_{n-1}c_n}}&=\inf{\Delta^{(\pm D, B)} _{c_1c_2...c_{n-1}[c_n+1]}},&\text{якщо $n \notin N_B$;}\\
\inf{\Delta^{(\pm D, B)} _{c_1c_2...c_{n-1}c_n}}& = \sup{\Delta^{(\pm D, B)} _{c_1c_2...c_{n-1}[c_n+1]}},&\text{якщо $n \in N_B$},\\
\end{aligned}
\right.
$$
де $c_n \ne d_n-1$.
\item
$$
\bigcap^{\infty} _{n=1}{\Delta^{(\pm D, B)} _{c_1c_2...c_n}}=x=\Delta^{(\pm D, B)} _{c_1c_2...c_n...}.
$$
\end{enumerate}
\end{lemma}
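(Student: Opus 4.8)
The plan is to mirror the proof of the corresponding lemma for the $(\pm s, B)$-representation, since the $D$-based cylinders have exactly the same structure with $s^n$ replaced by the product $d_1 d_2 \cdots d_n$. Throughout I would lean on the $D$-analogue of Theorem~\ref{th: znakozminnyi-s-rozklad 1} (the Cantor-type existence theorem stated just above), which guarantees that every point of $[x_0'; x_0'']$ is representable; this surjectivity is what upgrades the trivial inclusions into equalities.

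First I would treat property~1. Fixing the prefix $c_1, \dots, c_n$, the constant part $S_n = \sum_{i=1}^n \frac{(-1)^{\delta_i}c_i}{d_1\cdots d_i}$ is common to every element of the cylinder, so membership reduces to the range of the tail $T = \sum_{j=n+1}^\infty \frac{(-1)^{\delta_j}\varepsilon_j}{d_1\cdots d_j}$. Term by term, a digit with $j \notin N_B$ contributes a nonnegative amount up to $\frac{d_j-1}{d_1\cdots d_j}$, while a digit with $j \in N_B$ contributes a nonpositive amount down to $-\frac{d_j-1}{d_1\cdots d_j}$; hence $\inf T = -\sum_{n<k\in N_B}\frac{d_k-1}{d_1\cdots d_k}$ and $\sup T = \sum_{n<k\notin N_B}\frac{d_k-1}{d_1\cdots d_k}$, attained at the extremal digit sequences $(\beta_j)$ and $(\gamma_j)$ of \eqref{eq: znakozminnyi-D-rozklad 2} and \eqref{eq: znakozminnyi-D-rozklad 3} respectively. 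This gives the inclusion $\Delta^{(\pm D,B)}_{c_1\cdots c_n}\subseteq[x';x'']$; for the reverse inclusion I would apply the $D$-theorem to the shifted tail to conclude that $T$ attains every intermediate value, so the cylinder is the whole closed segment.

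Properties 2--4 and 6 then follow by routine computation. For the length I would invoke the Cantor telescoping identity $\frac{d_k-1}{d_1\cdots d_k} = \frac{1}{d_1\cdots d_{k-1}} - \frac{1}{d_1\cdots d_k}$, whence $\sum_{k>n}\frac{d_k-1}{d_1\cdots d_k} = \frac{1}{d_1\cdots d_n}$; summing the two partial sums over $N_B$ and its complement recovers exactly the difference of the endpoints from property~1, giving $|\Delta^{(\pm D,B)}_{c_1\cdots c_n}| = \frac{1}{d_1\cdots d_n}$. Nesting~3 and decomposition~4 are immediate once one records that appending a digit $c$ fixes the first tail term and leaves a rank-$(n+1)$ cylinder; the fact that the $d_{n+1}$ subcylinders have total length $d_{n+1}\cdot\frac{1}{d_1\cdots d_{n+1}}=\frac{1}{d_1\cdots d_n}$ and abut (via property~5) forces them to exhaust the parent. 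For~6, the cylinders form a nested sequence of closed segments whose lengths $\frac{1}{d_1\cdots d_n}\to 0$ (since every $d_k\ge 2$), so by the nested-interval principle the intersection is a single point, necessarily the sum of the defining series.

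The one step that needs care is property~5, which I would settle by the same direct evaluation as in the $(\pm s,B)$-case, splitting into $n\notin N_B$ and $n\in N_B$. For $n\notin N_B$ one computes $\sup\Delta^{(\pm D,B)}_{c_1\cdots c_{n-1}c_n} - \inf\Delta^{(\pm D,B)}_{c_1\cdots c_{n-1}[c_n+1]}$ from the endpoint formula: the two full tail sums combine to $\frac{1}{d_1\cdots d_n}$ and cancel against the sign-carrying term $\frac{(-1)^{\delta_n}(c_n-(c_n+1))}{d_1\cdots d_n}=-\frac{1}{d_1\cdots d_n}$, yielding $0$. For $n\in N_B$ the $n$-th sign flips, so one instead verifies $\inf - \sup = 0$ by the same cancellation. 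This adjacency is the substantive point, since it is precisely what makes the union in~4 a gap-free cover; I expect this sign-bookkeeping to be the main, though still elementary, obstacle.
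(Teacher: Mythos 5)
Your proposal is correct and takes essentially the same approach as the paper: the paper actually states this $D$-version of the lemma without any proof, and your argument is a faithful transplantation of the proof it gives for the $(\pm s,B)$-analogue, with $s^n$ replaced by $d_1d_2\cdots d_n$ and the telescoping identity $\frac{d_k-1}{d_1\cdots d_k}=\frac{1}{d_1\cdots d_{k-1}}-\frac{1}{d_1\cdots d_k}$ supplying the length. If anything you are slightly more careful than the source, which verifies only properties 1 and 5 explicitly and dismisses 2--4 and 6 as following from the definition.
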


\subsection{Знакозмінний $\tilde Q$-розклад і квазі-нега-$\tilde Q$-представлення.}

Нехай $\tilde Q=||q_{i,n}||$~--- фіксована матриця, де $n=1,2,...,$ $i=\overline{0,m_n}$, $m_n\in \mathbb N \cup \{0,\infty\}$, для якої справедливими є наступні  властивості: 
\begin{itemize}
\item$ \mathbb R \ni q_{i,n}>0$;
\item  для будь-якого $n \in \mathbb N$: $\sum^{m_n}_{i=0} {q_{i,n}}=~1$; 
\item для довільної послідовності $(i_n)$, $ i_n \in \mathbb N \cup \{0\}$: $\prod^{\infty} _{n=1} {q_{i_n,n}}=0$.
\end{itemize}

Квазі-нега-D-представлення дійсного числа можна узагальнити до розкладу в ряд
\begin{equation}
\label{eq: znakozminne tilde Q - predstavlennya 1}
(-1)^{\rho_1}a_{i_1,1}+\sum^{\infty} _{n=2}{\left[(-1)^{\rho_n}a_{i_n,n}\prod^{n-1} _{j=1} {q_{i_j,j}}\right]},
\end{equation}
де
$$
\rho_n=\begin{cases}
2,&\text{якщо $n \notin N_B$;}\\
1,&\text{якщо $n \in N_B$.}
\end{cases}
$$
або,  внісши кілька поправок, квазі-нега-D-представлення дійсного числа можна узагальнити до  розкладу дійсного числа в ряд
\begin{equation}
\label{eq: quasy-nega-tilde Q-representation 1}
2-\rho_1+(-1)^{\rho_1}\delta{'} _{i_1,1}+\sum^{\infty} _{n=2}{\left[(-1)^{\rho_n}\delta^{'} _{i_n,n}\prod^{n-1} _{j=1}{q^{'} _{i_j,j}}\right]}+\sum_{1<n\in N_B}{\left(\prod^{n-1} _{j=1}{q^{'} _{i_j,j}}\right)}, ~\mbox{де}  
\end{equation}
$$
 \delta^{'} _{i_n,n}=\begin{cases}
0,&\text{якщо $i_n=0$ та $n\notin N_B$;}\\
\sum^{i_n-1} _{i=0} {q_{i,n}},&\text{якщо $i_{n}\ne 0$  та $n\notin N_B$;}\\
\sum^{m_{n}} _{i=m_{n}-i_{n}} {q_{i,n}},&\text{якщо $n\in N_B$,}
\end{cases}
$$
 $$
q^{'} _{i_n,n}=\begin{cases}
q_{i_n,n},&\text{якщо $n \notin N_B$;}\\
q_{m_n-i_n,n},&\text{якщо $n \in N_B$,}
\end{cases}
$$

Розклад числа $x\in[0;1]$ в ряд \eqref{eq: quasy-nega-tilde Q-representation 1} називається \emph{квазі-нега-$\tilde Q$-представленням числа $x$}, а позначення $x=\Delta^{(\pm \tilde Q, B)} _{i_1i_2...i_n...}$ ---  \emph{квазі-нега-$\tilde Q$-зображенням числа $x$}.

\begin{lemma} Справедливою є слідуюча тотожність:
$$
\Delta^{(\pm \tilde Q, B)} _{i_1i_2...i_n...}\equiv \Delta^{\tilde Q} _{i^{'} _1i^{'} _2...i^{'} _n...},
$$
де
$$
i^{'} _n=\begin{cases}
i_n,&\text{якщо $n \notin N_B$;}\\
m_n-i_n,&\text{якщо $n \in N_B$,}
\end{cases}
$$
\end{lemma}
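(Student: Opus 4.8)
The plan is to verify the identity level by level, reducing it to the row-stochasticity of $\tilde Q$ together with careful bookkeeping of the constant terms in \eqref{eq: quasy-nega-tilde Q-representation 1}. Write $\delta_{i,n}=\sum_{k=0}^{i-1}q_{k,n}$ (with $\delta_{0,n}=0$) for the cumulative weight of the classical $\tilde Q$-representation, so that
$$\Delta^{\tilde Q}_{i'_1 i'_2\ldots}=\delta_{i'_1,1}+\sum_{n=2}^{\infty}\delta_{i'_n,n}\prod_{j=1}^{n-1}q_{i'_j,j}.$$
It then suffices to show that this right-hand side coincides, term for term in $n$, with the value produced by \eqref{eq: quasy-nega-tilde Q-representation 1}.

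First I would match the weights. From the definitions of $i'_j$ and of $q'_{i_j,j}$ one has $q_{i'_j,j}=q'_{i_j,j}$ for every $j$: both equal $q_{i_j,j}$ when $j\notin N_B$ and both equal $q_{m_j-i_j,j}$ when $j\in N_B$. Consequently $\prod_{j=1}^{n-1}q_{i'_j,j}=\prod_{j=1}^{n-1}q'_{i_j,j}$ for all $n$, so the two sides carry exactly the same weights and only the coefficient attached to each weight has to be compared.

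Next I would compare the coefficient of $\prod_{j=1}^{n-1}q'_{i_j,j}$. If $n\notin N_B$ then $\rho_n=2$, so $(-1)^{\rho_n}=1$, and $i'_n=i_n$ gives $\delta_{i'_n,n}=\sum_{k=0}^{i_n-1}q_{k,n}=\delta'_{i_n,n}$; the coefficients agree. If $n\in N_B$ the key input is the row-stochasticity $\sum_{k=0}^{m_n}q_{k,n}=1$, which yields the complementation identity
$$\delta_{i'_n,n}=\delta_{m_n-i_n,n}=\sum_{k=0}^{m_n-i_n-1}q_{k,n}=1-\sum_{k=m_n-i_n}^{m_n}q_{k,n}=1-\delta'_{i_n,n}.$$
For such $n>1$ the main sum of \eqref{eq: quasy-nega-tilde Q-representation 1} contributes the coefficient $(-1)^{\rho_n}\delta'_{i_n,n}=-\delta'_{i_n,n}$, while the sum $\sum_{1<n\in N_B}(\cdots)$ supplies an extra $+1$ on the same weight; together they give $1-\delta'_{i_n,n}=\delta_{i'_n,n}$. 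For $n=1$ the role of that extra unit is played by the scalar $2-\rho_1$: when $1\in N_B$ it equals $1$ and the coefficient becomes $1-\delta'_{i_1,1}=\delta_{i'_1,1}$, and when $1\notin N_B$ it vanishes and the coefficient is $\delta'_{i_1,1}=\delta_{i'_1,1}$ (the empty product being $1$).

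Hence, after combining contributions, the coefficient of $\prod_{j=1}^{n-1}q_{i'_j,j}$ in \eqref{eq: quasy-nega-tilde Q-representation 1} equals $\delta_{i'_n,n}$ for every $n$, which is exactly the coefficient in $\Delta^{\tilde Q}_{i'_1 i'_2\ldots}$, and the identity follows. The step I expect to require the most care is the combination itself: the two sums appearing in \eqref{eq: quasy-nega-tilde Q-representation 1} need not converge separately (for instance with $q_{i_n,n}=\frac{n}{n+1}$ one has $\prod_{j=1}^{n-1}q_{i_j,j}=\frac1n$, so $\sum_{1<n\in N_B}\prod_{j=1}^{n-1}q'_{i_j,j}$ may diverge), so the regrouping must be performed at each fixed level $n$ — joining the at most two contributions attached to the common weight $\prod_{j=1}^{n-1}q'_{i_j,j}$ into the single nonnegative term $\delta_{i'_n,n}\prod_{j=1}^{n-1}q_{i'_j,j}$ — and only afterwards summed over $n$. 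This reproduces the genuinely convergent classical series without rearranging an infinite sum, its convergence being guaranteed by the third defining property of $\tilde Q$.
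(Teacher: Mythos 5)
Your argument is correct, and in fact the paper states this lemma without any proof at all, so there is no authorial argument to compare it with; your level-by-level verification is the natural one and supplies exactly what the paper omits. The two pillars are right: the weight identity $q_{i'_j,j}=q'_{i_j,j}$ follows immediately from the two case definitions, and the coefficient identity reduces, for $n\in N_B$, to the complementation $\delta_{m_n-i_n,n}=1-\delta'_{i_n,n}$ coming from $\sum_{k=0}^{m_n}q_{k,n}=1$, with the extra unit supplied by $\sum_{1<n\in N_B}\bigl(\prod_{j=1}^{n-1}q'_{i_j,j}\bigr)$ for $n>1$ and by the scalar $2-\rho_1$ for $n=1$. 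Your closing observation is the most valuable part of the write-up: as literally printed, \eqref{eq: quasy-nega-tilde Q-representation 1} can be the sum of two individually divergent series (your example with $q_{0,n}=\tfrac{n}{n+1}$ and $N_B$ cofinite is a genuine one), so the expression only acquires a meaning after the two contributions attached to the common weight $\prod_{j=1}^{n-1}q'_{i_j,j}$ are merged into the single nonnegative term $\delta_{i'_n,n}\prod_{j=1}^{n-1}q_{i'_j,j}$, whose series converges by the telescoping bound $\sum_{n\le N}\delta_{i'_n,n}\prod_{j<n}q_{i'_j,j}\le 1-\prod_{j\le N}q_{i'_j,j}\le 1$. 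This is a defect of the paper's definition rather than of your proof, and your reading is the only one under which the lemma is true; it would be worth stating explicitly that you take this term-by-term pairing as the definition of the left-hand side. (One further caveat neither you nor the paper addresses: for $n\in N_B$ the symbol $m_n-i_n$ forces $m_n<\infty$ on the positions in $N_B$, even though the matrix $\tilde Q$ is allowed to have $m_n=\infty$.)
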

$\Delta^{\tilde Q} _{j_1j_2...j_n...}$ --- $\tilde Q$-зображення дійсного числа з $[0;1]$.

Тобто,
$$
x=a_{i^{'} _1,1}+\sum^{\infty} _{n=2}{\left[a_{i^{'} _n,n}\prod^{n-1} _{j=1}{q_{i^{'} _j,j}}\right]}.
$$

Тополого-метричні теорії представлення у чисел у вигляді розкладів в ряди \eqref{eq: znakozminne tilde Q - predstavlennya 1},~\eqref{eq: quasy-nega-tilde Q-representation 1} будуть описані в послідуючій статті.

Цілком очевидно, що коли $N_B=\varnothing$, отримуємо знакододатне представлення дійсних чисел (s-ве, канторівське чи $\tilde Q$-розклад). Коли ж $N_B$ є множиною лише парних або лише непарних натуральних  чисел, то отримуємо відповідне знакопочережне представлення.

Проведені вище міркування підштовхують до постановки та дослідження наступних задач:

\textbf{Задача 1.} \emph{Чи може існувати знакозмінний аналог деякого представлення дійсних чисел, якщо існуюють відповідні знакододатне та знакопочережні представлення?} Вище показано, що останнє є справедливим для s-го, канторівського та $\tilde Q$-представлень. Але чи існують інші такі представлення? Що можна сказати, наприклад, про представлення чисел рядом, елементи якого є числами, обернені до натуральних чисел? Наприклад, ряди Люрота.

Тобто, нехай, наприклад, маємо ряд виду
$$
\frac{(-1)^{\delta_1}}{a_1}+\sum_{n\ge 2}{\frac{(-1)^{\delta_n}}{a_1(a_1+1)...a_{n-1}(a_{n-1}+1)a_n}},
$$
де $a_n \in \mathbb N$, $N_B$ --- фіксована підмножина натуральних чисел, $B=(b_n)$ --- зростаюча послідовність всіх елементів з $N_B$,
$$
\delta_n=\begin{cases}
2,&\text{якщо $n \notin N_B$;}\\
1,&\text{якщо $n \in N_B$.}
\end{cases}
$$

\emph{Чи можна представити довільне число з деякого інтервалу у вигляді розкладу в  останній ряд?}  (Задача ускладнюється тим, що в знакододатному та знакопочережному рядах Люрота знаменники відповідних доданків не є однаковими). Якщо ні, то як треба видозмінити знаменник, щоб довільне число з деякого інтервалу  можна було представити у вигляді розкладу в отриманий знакозмінний ряд?

\textbf{Задача 2.} Як виявилось, ввівши кілька поправок, квазі-нега-s-кове, квазі-нега-D-,  квазі-нега-$\tilde Q$-представлення можна розглядати як системи числення, при означенні яких використано певний перетворювач цифр (символів). Тобто, нехай $N_B$ --- фіксована підмножина натуральних чисел, тоді:
\begin{itemize}
\item для s-го представлення
$$
[0;1]\ni x=\Delta^{\pm s, B} _{\alpha_1\alpha_2...\alpha_n...}\equiv  \Delta^{s} _{\varphi(\alpha_1)\varphi(\alpha_2)...\varphi(\alpha_n)...},
$$
$$
\varphi(\alpha_n)=\begin{cases}
\alpha_n,&\text{якщо $n \notin N_B$;}\\
s-1-\alpha_n,&\text{якщо $n \in N_B$.}
\end{cases}
$$
\item для канторівського представлення
$$
[0;1]\ni x=\Delta^{(\pm D, B)} _{\varepsilon_1\varepsilon_2...\varepsilon_n...}\equiv \Delta^D _{\varphi(\varepsilon_1)\varphi(\varepsilon_2)...\varphi(\varepsilon_n)...},
$$
$$
\varphi(\varepsilon_n)=\begin{cases}
\varepsilon_n,&\text{якщо $n \notin N_B$;}\\
d_n-1-\varepsilon_n,&\text{якщо $n \in N_B$.}
\end{cases}
$$
\item для $\tilde Q$-представлення
$$
[0;1]\ni x=\Delta^{(\pm \tilde Q, B)} _{i_1i_2...i_n...}\equiv\Delta^{\tilde Q} _{\varphi(i_1)\varphi(i_2)...\varphi(i_n)...},
$$
$$
\varphi(i_n)=\begin{cases}
i_n,&\text{якщо $n \notin N_B$;}\\
m_n-i_n,&\text{якщо $n \in N_B$.}
\end{cases}
$$
\end{itemize}

Як наслідок, виникає \emph{задача про застосування перетворюачів цифр (чи комбінацій цифр) до побудов систем числення та  побудову системи числення за наперед заданими геометричними властивостями.}

\begin{remark}
Довільну функцію $f$ спеціального виду, яка володіє певним набором властивостей, можна використовувати для моделювання  представлення дійсних чисел з деякого інтервалу. Зокрема, для моделювання представлень дійсних чисел з нульовою надлишковістю відзначимо такі набори властивостей функції $f$:
\begin{itemize}
\item функція $f$ --- неперервна і строго монотонна на деякому інтервалі;
\item функція $f$: неперервна майже скрізь на деякому інтервалі, що є її областю визначення; множиною значень функції $f$ є інтервал; майже скрізь в розумінні міри Лебега $\sharp\{x: f(x)=y_0 \}=1$, де $y_0$ --- де довільне фіксоване число з множини значень $f$.
\end{itemize}
\end{remark}

Розглянемо другий випадок. Використаємо функції, вивчені автором в  \cite{{S. Serbenyuk, functions with complicated local structure 2013}, {S. Serbenyuk abstract 6}, {S. Serbenyuk abstract 7}, {Symon12(2)}}.

 \section{Псевдо-s-ве зображення. Моделювання систем числення за наперед заданими геометричними властивостями. Найпростіші приклади}

\subsection{Окремі приклади моделювання представлень дійсних чисел  за наперед заданими геометричними властивостями таких представлень.}

\textbf{Приклад 1.}~Нехай маємо відрізок $[0;1]$. Нехай потрібно побудувати представлення дробової частини дійсного числа, при побудові якого на кожному кроці $n$, $n=1,2,3,...$, здійснюється поділ елементарного відрізка на $3^n$ однакових відрізків довжиною $\frac{1}{3^n}$. Причому, на кожному кроці $n$ розбиття  відрізка рангу $n-1$ циліндричний відрізок, що відповідає цифрі $0$ в моделюємому зображенні числа є першим зліва, а решта два (відповідні цифрам $1$ та $2$) розташовані ''справа наліво''.

Як наслідок, отримаємо наступне зображення
$$
\Delta^{3^{'}} _{\alpha_1\alpha_2...\alpha_n...}\equiv\Delta^3 _{\varphi(\alpha_1)\varphi(\alpha_2)...\varphi(\alpha_n)...}\equiv \sum^{\infty} _{n=1}{\frac{\varphi(\alpha_n)}{3^n}},
$$
де
\begin{center}
\begin{tabular}{|l|l|l|l|}
\hline
$\alpha$ & 0 & 1 & 2\\
\hline
$\varphi(\alpha)$ & 0 & 2 & 1\\
\hline
\end{tabular}
\end{center}
або, що еквівалентно
$$
\varphi(\alpha_n)=\begin{cases}
\alpha_n,&\text{якщо $\alpha_n=0$;}\\
s-1-\alpha_n,&\text{якщо $\alpha_n \ne 0$.}
\end{cases}
$$

Слід відмітити, що 
$$
\Delta^{3^{'}} _{\alpha_1\alpha_2...\alpha_{n-1}2(0)}=\Delta^{3^{'}} _{\alpha_1\alpha_2...\alpha_{n-1}0(1)},
$$
$$
\Delta^{3^{'}} _{\alpha_1\alpha_2...\alpha_{n-1}1(0)}=\Delta^{3^{'}} _{\alpha_1\alpha_2...\alpha_{n-1}2(1)}.
$$
Крім того,
$$
x=\sum^{\infty} _{k=1}{\frac{2-\gamma_k}{3^{a_1+a_2+...+a_k}}}\equiv \Delta^3 _{\underbrace{0...0}_{a_1-1}\varphi(\gamma_1)\underbrace{0...0}_{a_1+a_2-2}\varphi(\gamma_2)...\underbrace{0...0}_{a_1+a_2+...+a_k-k}\varphi(\gamma_k)...}.
$$
$\gamma_k \in \{1,2\}$, $(a_k)$ --- деяка послідовність натуральних чисел.

\textbf{Приклад 2.} Приклад періодичного використання перетворювачів цифр.
$$
\Delta^{3^{'}} _{\alpha_1\alpha_2...\alpha_n...}\equiv\Delta^3 _{O(\alpha_1)\varphi_1(\alpha_2)T(\alpha_3)O(\alpha_4)\varphi_1(\alpha_5)T(\alpha_6)...},
$$
де $O(\alpha_n)=\alpha_n$, $T(\alpha_n)=s-1-\alpha_n$, $\varphi_1$ --- деякий перетворювач цифр, причому $\varphi(i)\ne\varphi(j)$, $i\ne j$ $i,j\in\{0,1,2\}.$

У трійковій системі числення можна означити $m=3!=6$ різних перетворювачів цифр. Їх наведено  в наступній таблиці.
\begin{center}
\begin{tabular}{|c|c|c|c|}
\hline
 &$ $ 0 &$ 1 $ & $2$\\
\hline
$\varphi_1 (\alpha_n) $ &$0$ & $1$ & $2$\\
\hline
$\varphi_2 (\alpha_n) $ &$0$ & $2$ & $1$\\
\hline
$\varphi_3 (\alpha_n) $ &$1$ & $0$ & $2$\\
\hline
$\varphi_4 (\alpha_n) $ &$1$ & $2$ & $0$\\
\hline
$\varphi_5 (\alpha_n) $ &$2$ & $0$ & $1$\\
\hline
$\varphi_6 (\alpha_n) $ &$2$ & $1$ & $0$\\
\hline
\end{tabular}
\end{center}

\subsection{Псевдо-s-ве зображення.}

Нехай $1<s$  --- фіксоване число. 

\begin{definition}
\emph{Перетворювачем s-их цифр} називатимемо  відображення $\varphi$, областю визначення і множиною значень якого є на алфавіт $A\equiv \{0,1,...,s-1\}$ s-их цифр, таке, що $\varphi(i)\ne\varphi(j)$ для довільних $i, j$, для яких  $i\ne j$. 

 \emph{Перетворювачем $k$-цифрових наборів  s-их цифр} називатимемо  функцію $\varphi$  $k$ змінних, областю визначення і множиною значень якої є декартів добуток
$$
A^k=\underbrace{A\times A\times ... \times A}_k
$$
алфавітів s-их цифр, така, що $\varphi(i)\ne\varphi(j)$ для  довільних $i, j$, для яких  $i\ne j$.
\end{definition}

\begin{definition}
Зображення дійсних чисел $\Delta^{\hat{s}} _{\alpha_1\alpha_2...\alpha_n...}$, яке взаємопов'язане з s-им зображенням наступним чином 
$$
\Delta^{\hat{s}} _{\alpha_1\alpha_2...\alpha_n...}\equiv\Delta^s _{\varphi(\alpha_1)\varphi(\alpha_2)...\varphi(\alpha_n)...}\equiv \sum^{\infty} _{n=1}{\frac{\varphi(\alpha_n)}{s^n}},
$$
де $\varphi$ --- деякий перетворювач s-их цифр (одноцифрових наборів s-их цифр), називатимемо \emph{псевдо-s-им зображенням з набором параметрів $\{(1,1)\}$}. 

Зображення дійсних чисел $\Delta^{\hat{s}} _{\alpha_1\alpha_2...\alpha_n...}$, яке взаємопов'язане з s-им зображенням наступним чином 
$$
\Delta^{\hat{s}} _{\alpha_1\alpha_2...\alpha_n...}\equiv\Delta^s _{\varphi(\alpha_1...\alpha_k)\varphi(\alpha_{k+1}...\alpha_{2k})...\varphi(\alpha_{kn+1}...\alpha_{(n+1)k})...},
$$
де $\varphi$ --- деякий перетворювач $k$-цифрових наборів s-их цифр, називатимемо \emph{псевдо-s-им зображенням з набором параметрів $\{(1,k)\}$}. 

Зображення дійсних чисел $\Delta^{\hat{s}} _{\alpha_1\alpha_2...\alpha_n...}$, яке взаємопов'язане з s-им зображенням наступним чином 
$$
\Delta^{\hat{s}} _{\alpha_1\alpha_2...\alpha_n...}\equiv\Delta^s _{(\varphi_1(\alpha_1...\alpha_{k_1})\varphi_2(\alpha_{k_1+1}...\alpha_{k_2})...\varphi_m(\alpha_{k_{m-1}+1}...\alpha_{k_m}))},
$$
де під круглими дужками розуміється період,  $\varphi_t$ --- деякий перетворювач $k_t$-цифрових наборів s-их цифр, $t=\overline{1,m}$, $m$ --- деяке фіксоване натуральне число, $k_1,...,k_m$ --- фіксований набір натуральних чисел, називатимемо \emph{псевдо-s-им зображенням з набором параметрів $\{(m; k_1, k_2,..., k_m)\}$}. 
\end{definition}

\begin{theorem}
Метричні та фрактальні властивості псевдо-s-го та  s-го представлень співпадають.
\end{theorem}

В послідуючих статтях подібним чином буде узагальнено до псевдо-D-зображення та псевдо-$\tilde Q$-зображення D- та $\tilde Q$-зображення дійсних чисел.

\end{document}